\theoremstyle{definition}
\def\11{\textbf{$1$}}
\def\CC{{\mathbb{C}}}
\newcommand{\eps}{\varepsilon}
\newcommand{\norm}[1]{\left\|#1\right\|}
\newcommand{\Norm}[1]{\Bigl\|#1\Bigr\|}
\newcommand{\bgl}{\begin{eqnarray}}
\newcommand{\bglst}{\begin{eqnarray*}}
\newcommand{\egl}{\end{eqnarray}}
\newcommand{\eglst}{\end{eqnarray*}}
\newcommand{\N}{\rm{I\!N}}
\newcommand{\beinschub}{\vspace{2mm}\begin{small}\newline\makebox[5ex]{}
     \begin{minipage}[t]{75ex}}
\newcommand{\eeinschub}{\end{minipage}\end{small}\vspace{3mm}\newline}
\newcommand{\betr}[1]{| #1  |}
\newcommand{\Betr}[1]{\Bigl| #1  \Bigr|}
\newcommand{\mdE}{\,:\,}
\theoremstyle{plain} 
\newtheorem{theorem}{\indent\sc Theorem}[section]
\newtheorem{lemma}[theorem]{\indent\sc Lemma}
\newtheorem{corollary}[theorem]{\indent\sc Corollary}
\newtheorem{proposition}[theorem]{\indent\sc Proposition}
\theoremstyle{definition} 
\newtheorem{definition}[theorem]{\indent\sc Definition}
\begin{document}

\numberwithin{equation}{section}

\title[Weak Banach-Saks property for preduals of JBW$^*$-triples]{Weak Banach-Saks property and Koml\'os' theorem for preduals of JBW$^*$-triples}

\author[A.M. Peralta]{Antonio M. Peralta}
\address{Departamento de An{\'a}lisis Matem{\'a}tico, Universidad de Granada,\\
Facultad de Ciencias 18071, Granada, Spain}
\curraddr{Visiting Professor at Department of Mathematics, College of Science, King Saud University, P.O.Box 2455-5, Riyadh-11451, Kingdom of Saudi Arabia.}
\email{aperalta@ugr.es}

\author[H. Pfitzner]{Hermann Pfitzner}
\address{Université d'Orléans,\\
BP 6759,\\
F-45067 Orléans Cedex 2,\\
France}
\email{pfitzner@labomath.univ-orleans.fr}

\thanks{First author partially supported by the Spanish Ministry of Economy and Competitiveness project no. MTM2014-58984-P, Junta de Andaluc\'{\i}a grant FQM375 and Deanship of Scientific
Research at King Saud University (Saudi Arabia) research group no. RG-1435-020.}

\subjclass[2011]{Primary 46L05; 46L40}

\keywords{}

\date{}
\maketitle

\begin{abstract} We show that the predual of a JBW$^*$-triple has the weak Banach-Saks property,
that is, reflexive subspaces of a JBW$^*$-triple predual are super-reflexive. We also prove that JBW$^*$-triple preduals satisfy the Koml\'os property (which can be considered an abstract
version of the weak law of large numbers).
The results rely on two previous papers from which we infer the fact that, like in the classical case of $L^1$, a subspace of
a JBW$^*$-triple predual contains $\ell_1$ as soon as it contains uniform copies of $\ell_1^n$.
\end{abstract}

\maketitle
\thispagestyle{empty}

\section{Introduction}\label{sec:intro}\noindent
From the two recent papers \cite{PePfi, PePfi2}, which precede this note, we know that if a sequence in the predual of a JBW$^*$-triple
spans the spaces $\ell_1^n$ uniformly almost isometrically then it admits a subsequence spanning $\ell_1$ almost isometrically,
as it is well known for $L^1$-spaces. This note gathers some classical consequences of this result, namely,
the weak Banach-Saks property (for which we use a criterion of Rosenthal-Beauzamy), the fact that reflexive subspaces of
JBW$^*$-triple preduals are super-reflexive, and the Koml\'os property. The latter generalizes the
(weak) law of large numbers in $L^1$ (cf.\ \cite[Thm.\ 1.9.13]{Lin-Bochner}).
Further, we prove the technical result that addition on a JBW$^*$-predual is jointly
sequentially continuous with respect to the abstract measure topology defined in \cite{Pfi15}. As a (still technical) consequence this topology is Fr\'echet-Urysohn on the unit ball.\medskip

{\it \textbf{Some notation.}}
Basic notions and properties not explained here can be found for Banach spaces in
\cite{Diestel1984, FHHMZ, JohLin_Vol1-2}, and for JBW$^*$-triples in \cite{CabRodPal2014, Chu2012},
but also, and particularly, in the introductory sections of \cite{PePfi, PePfi2}, on which this note relies heavily.
We only recall some notation concerning copies of $\ell_1$.
Let $r>0$. We say that elements $x_1,\ldots,x_n$ of a Banach space $X$ \emph{span an $r$-copy of $\ell_1^n$} if
$$\sum_{j=1}^n\betr{\alpha_j}\geq\norm{\sum_{j=1}^n \alpha_j x_j}\geq r\sum_{j=1}^n\betr{\alpha_j},$$ for all scalars $\alpha_j$.
Let $(x_j)$ be a sequence in $X$. If $(\delta_m)$ is a sequence in $[0,1[$ tending to zero and
$\displaystyle \sum_{j\geq m}\betr{\alpha_j}\geq\norm{\sum_{j\geq m} \alpha_j x_j}\geq (1-\delta_m)r\sum_{j\geq m}\betr{\alpha_j}$,
for all $m\in\N$ and all scalars $\alpha_j$
we say that $(x_j)$ \emph{spans $\ell_1$ almost $r$-isomorphically}.
When the latter holds for $r=1$ we say that $(x_j)$ \emph{spans $\ell_1$ almost isometrically}.
We say that the sequence $(x_j)$ \emph{spans the spaces $\ell_1^n$ $r$-uniformly}
(or simply \emph{spans $\ell_1^n$'s uniformly})
if for each natural $n$ there are $x_{j_1},\ldots,x_{j_n}$
spanning $\ell_1^n$ $r$-isomorphically. If  $(x_j)$ spans the spaces $\ell_1^n$ $(1-\eps)$-uniformly for each $\eps>0$ we say that $(x_j)$ \emph{spans $\ell_1^n$'s uniformly almost isometrically}.
A block sequence $(y_i)$ of $(x_n)$ is defined by $y_i=\sum_{j\in A_i}\alpha_jx_j,$ where the $A_i$ are finite pairwise disjoint subsets
of $\N$ and the $\alpha_j$ are scalars.

\section{Sequences spanning the spaces $\ell_1^n$ in the predual of a JBW$^*$-triple}

It is known that a Banach space contains the $\ell_1^n$'s uniformly almost isometrically if it contains the $\ell_1^n$'s uniformly
(see e.g. \cite[Lemma  1]{Pisier1973}). We prove the following quantitative version
only for lack of an exact reference.

\begin{lemma}\label{l James distortion finite}
If a sequence $(x_j)$ of the closed unit ball of a Banach space spans $\ell_1^n$'s $r$-uniformly {\rm(}where $r>0${\rm)},
then there is a block sequence $(y_i)$ spanning $\ell_1^n$'s uniformly almost isometrically;
more specifically, the blocks $\displaystyle y_i=\sum_{j\in A_i}\alpha_jx_j$ are such that for each $n\in\N$ the finite sequence $(y_i)_{i\in B_n}$,
where $\displaystyle B_n=\left\{1+\frac{(n-1)n}{2},\ldots,\frac{n(n+1)}{2}\right\}$, spans a $(1-2^{-n})$-isomorphic copy of $\ell_1^n$,
$\norm{y_i}=1,$ and $\displaystyle \sum_{j\in A_i} \betr{\alpha_j}<\frac{1+2^{-i}}{r},$ for all $i\in\N.$
\end{lemma}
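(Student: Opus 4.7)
The plan is to build the blocks $y_i$ inductively in groups $B_n$, using a Pisier--James distortion step at each stage. First, I would note that the coefficient bound $\sum_{j\in A_i}\betr{\alpha_j}<(1+2^{-i})/r$ is essentially free: for any normalized block $y_i=\sum_{j\in A_i}\alpha_jx_j$ of $(x_j)$, applying the $r$-uniform $\ell_1^n$-spanning of $(x_j)$ to the finite set $\{x_j\mdE j\in A_i\}$ gives $1=\norm{y_i}\geq r\sum_{j\in A_i}\betr{\alpha_j}$, and hence $\sum_{j\in A_i}\betr{\alpha_j}\leq 1/r<(1+2^{-i})/r$. So the real content is producing, for each $n\in\N$, a group of $n$ normalized blocks (disjointly supported in $(x_j)$) whose linear span is $(1-2^{-n})$-isomorphic to $\ell_1^n$.

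The key tool is the Pisier-type James distortion used in \cite[Lemma 1]{Pisier1973}: any sequence $(z_j)$ in the unit ball spanning $\ell_1^m$'s $r$-uniformly admits, for every $\eta>0$ and every $N\in\N$, $N$ disjointly supported normalized blocks spanning $\ell_1^N$ with constant $1-\eta$. To prove it, one sets $s=\sup\{s'\geq r\mdE (z_j)\text{ has a block basis spanning }\ell_1^m\text{'s }s'\text{-uniformly}\}$, takes a block basis $(u_k)$ nearly attaining $s$, passes to a tail on which $s$ is uniformly attained, and then picks normalized blocks $v_k=c_k^{-1}\sum_j\alpha_j^{(k)}u_j$ whose coefficient sums $1/c_k$ nearly saturate $1/s$. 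Estimating $\norm{\sum_k\beta_kv_k}$ from below via the $\ell_1$-constant of $(u_k)$ shows that $(v_k)$ has $\ell_1$-constant arbitrarily close to $1$, forcing $s=1$.

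With this claim available, the induction is straightforward. Assume $y_1,\ldots,y_{m_{n-1}}$ (with $m_{n-1}=n(n-1)/2$) have been constructed as blocks of $(x_j)$ using only indices $j\leq J_{n-1}$. Since discarding a finite initial segment preserves uniform $\ell_1^m$-spanning, the tail $(x_j)_{j>J_{n-1}}$ still spans $\ell_1^m$'s $r$-uniformly. Applying the distortion claim to this tail with $\eta=2^{-n}$ and $N=n$ produces $n$ disjointly supported normalized blocks spanning $\ell_1^n$ with constant $\geq 1-2^{-n}$; label these $(y_i)_{i\in B_n}$ and set $J_n$ to be the largest $(x_j)$-index they involve. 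The main obstacle is the distortion step itself---the near-saturation argument underlying the Pisier lemma---whereas the inductive bookkeeping with the groups $B_n$ is then routine.
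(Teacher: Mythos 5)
Your overall strategy (inductive construction in groups $B_n$, with a James--Pisier distortion step based on the stabilization of the quantity $\sup\inf\norm{\sum\alpha_k x_{j_k}}$ over tails) is exactly the route the paper takes, and that part of the sketch is sound. However, your first paragraph contains a genuine error. You claim the coefficient bound is ``essentially free'' because for any normalized block $y_i=\sum_{j\in A_i}\alpha_jx_j$ one has $1=\norm{y_i}\geq r\sum_{j\in A_i}\betr{\alpha_j}$. This misreads the hypothesis: ``$(x_j)$ spans $\ell_1^n$'s $r$-uniformly'' is an \emph{existential} statement (for each $n$ there exist \emph{some} $x_{j_1},\ldots,x_{j_n}$ spanning an $r$-copy of $\ell_1^n$), not a statement about every finite subset of the sequence. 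A normalized block supported on a ``bad'' finite set can have arbitrarily large coefficient sum: in $\ell_1$ take $x_1=e_1$, $x_2=(-e_1+\delta e_2)/(1+\delta)$ and $x_j=e_j$ for $j\geq3$; then $(x_j)$ spans $\ell_1^n$'s $1$-uniformly, yet the normalized block $\frac{1}{\delta}x_1+\frac{1+\delta}{\delta}x_2=e_2$ has coefficient sum $(2+\delta)/\delta$.

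This matters for your construction because your distortion claim produces normalized blocks $v_k$ of an auxiliary block basis $(u_j)$, so in terms of the original $(x_j)$ these are blocks of blocks, and nothing in your argument controls their coefficient sums. The fix --- which is what the paper does --- is to form the blocks directly from vectors $x_{j_1},\ldots,x_{j_{m_2}}$ chosen to nearly attain the stabilized constant, i.e.\ with $\inf_{\sum\betr{\alpha_k}=1}\norm{\sum\alpha_kx_{j_k}}>r-\eta$; then the normalized blocks have coefficient sum $<1/(r-\eta)$, and choosing $\eta$ so that $r-\eta>r/(1+2^{-n(n+1)/2})$ yields the stated bound $(1+2^{-i})/r$. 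Note that the slack factor $1+2^{-i}$ in the statement is there precisely because the clean bound $1/r$ you assert is not attainable this way: the infimum is only \emph{nearly} attained, never exactly. So the coefficient estimate is not free; it has to be threaded through the distortion step, which also forces the blocks to live on the ``good'' index sets rather than on an arbitrary block basis.
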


\begin{proof}
Suppose that $(x_j)$ spans $\ell_1^n$'s $r$-uniformly ($r>0$).
Since the $B_n$ (as in the statement of the lemma) form a partition of ${\N}=\bigcup_{n\in\N}B_n$ we construct $(y_i)$ by
constructing  $(y_i)_{i\in B_n}$ inductively over $n$. Let $n\in\N$, suppose $(y_i)_{i\in B_1}$, \ldots,  $(y_i)_{i\in B_{n-1}}$
have been constructed and set $p=\max\bigcup_{q=1}^{n-1}\bigcup_{i\in B_q}A_i$ (with $p=0$ if $n=1$).
Then the sequence $(x_{j+p})$ spans $\ell_1^n$'s $r$-uniformly, too.

%
Set $\rho={r}/{(1+2^{-{n(n+1)}/{2}})}< r$.
For each natural $m$ we define
$$ r_m :=\sup_{x_{j_1}, \ldots, x_{j_m}\geq p+1}\;\inf_{\sum_{k=1}^m\betr{\alpha_k}=1}\norm{\sum_{k=1}^m \alpha_k x_{j_k}}.$$
Then $r_m\geq r$ for all $m$.
The sequence $(r_m)$ is decreasing because for any $\gamma>0$ and any $m$ there are $x_{j_1}, \ldots, x_{j_{m+1}}\geq p+1$ such that
$\displaystyle r_{m+1}-\gamma<\inf_{\sum_{k=1}^{m+1}\betr{\alpha_k}=1}\norm{\sum_{k=1}^{m+1} \alpha_k x_{j_k}}
\leq r_m$.\smallskip

Set $r'=\lim_{m} r_m$, then we have $r'\geq r$.
Choose $\eta>0$ such that $\frac{r'-\eta}{r'+\eta}>1-2^{-n}$ and $r-\eta>\rho$
and choose $m_1$ such that $r_{m_1}<r'+\eta$. Set $m_2=nm_1$.
Finally, choose $x_{j_1}, \ldots, x_{j_{m_2}}$ such that
$\displaystyle r_{m_2}-\eta<\inf_{\sum_{k=1}^{m_2}\betr{\alpha_k}=1}\norm{\sum_{k=1}^{m_2} \alpha_k x_{j_k}}$.

Set $A'_l=\left\{j_k\mdE k\in\{1+(l-1)m_1, \ldots, lm_1\}\right\},$ for $l=1, \ldots, n$. The $A'_l$ are pairwise disjoint and are disjoint
from the $A_i$ constructed so far (i.e. $i\in\bigcup_{q=1}^{n-1}B_q$ (if $n\geq2$)).
If we set $\displaystyle \tilde{z}_l=\sum_{j\in A'_l}\alpha_j^{(l)}x_j$, for $l\leq n$, where the $\alpha_j^{(l)}$ are such that
$\displaystyle \sum_{j\in A'_l}\betr{\alpha_j^{(l)}}=1$ and
$\displaystyle \norm{\tilde{z}_l}=\inf_{\sum_{j\in A'_l}\betr{\alpha_j}=1}\norm{\sum_{j\in A'_l} \alpha_j x_j},$ then $\norm{\tilde{z}_l}\leq r_{m_1}<r'+\eta$.\smallskip

Further, we have
$$\inf_{\sum_{l=1}^n\betr{\beta_l}=1}\norm{\sum_{l=1}^n \beta_l \tilde{z}_l}
\geq\inf_{\sum_{k=1}^{m_2}\betr{\alpha_k}=1}\norm{\sum_{k=1}^{m_2} \alpha_k x_{j_k}}>r_{m_2}-\eta\geq r'-\eta.$$
In particular, $\norm{\tilde{z}_l}>r'-\eta>\rho$.
Now we set $z_l=\displaystyle\frac{\tilde{z}_l}{\norm{\tilde{z}_l}},$ hence $\norm{z_l}=1$ and
$\displaystyle z_l=\sum_{j\in A'_l}\frac{\alpha_j^{(l)}}{\norm{\tilde{z}_l}} x_j$ with
$\displaystyle \sum_{j\in A'_l}\frac{\betr{\alpha_j^{(l)}}}{\norm{\tilde{z}_l}} < 1/\rho=(1+2^{-n(n+1)/2})/r \leq
(1+2^{-i})/r,$ for all $i\in B_n$.
Moreover, for all scalars $\beta_l$,
$$\sum_{l=1}^n\betr{\beta_l}\geq\norm{\sum_{l=1}^n\beta_l z_l}\geq\frac{r'-\eta}{r'+\eta}\sum_{l=1}^n\betr{\beta_l}
\geq(1-2^{-n})\sum_{l=1}^m\betr{\beta_l},$$
which shows that the $z_1, \ldots, z_n$ span a $(1-2^{-n})$-isomorphic copy of $\ell_1^n$.

It remains to set $A_i=A'_{i-((n-1)n/2)}$ and $y_i=z_{i-((n-1)n/2)}$ for all $i\in B_n$.
This ends the induction and the proof.
\end{proof}

The following proposition is a classical result for $L^1$ (see e.g. \cite{Rosenth1973}) and follows from \cite[Thm.\ 3.1]{RaynaudXu}
for preduals of von Neumann algebras.

\begin{proposition}\label{t l1 uniformly hence isomorphically}
If a sequence in the predual of a JBW$^*$-triple spans $\ell_1^n$'s $r$-uniformly then it admits a subsequence spanning
$\ell_1$ almost $r$-isomorphically.
\end{proposition}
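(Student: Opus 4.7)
The plan is to combine Lemma \ref{l James distortion finite} with the almost-isometric result from \cite{PePfi, PePfi2} recalled in the Introduction, and then transfer the resulting $\ell_1$-structure on the blocks back to a subsequence of the original sequence.

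First, apply Lemma \ref{l James distortion finite} to $(x_j)$ to obtain a block sequence $(y_i)$, $y_i=\sum_{j\in A_i}\alpha_j^{(i)}x_j$, with $\norm{y_i}=1$, $\sum_{j\in A_i}\betr{\alpha_j^{(i)}}<(1+2^{-i})/r$, and such that $(y_i)$ spans $\ell_1^n$'s uniformly almost isometrically. The sequence $(y_i)$ lies in the predual of the same JBW$^*$-triple, so the result from \cite{PePfi, PePfi2} furnishes a subsequence $(y_{i_k})$ spanning $\ell_1$ almost isometrically: there is $(\eta_m)\to 0$ in $[0,1[$ with
\[
\Norm{\sum_{k\geq m}\beta_k y_{i_k}}\geq(1-\eta_m)\sum_{k\geq m}\betr{\beta_k}
\]
for all scalars $\beta_k$ and all $m$.

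The remaining, and most delicate, step is to extract from each block $A_{i_k}$ a single index $j_k$ so that $(x_{j_k})$, which is a genuine subsequence of $(x_j)$, spans $\ell_1$ almost $r$-isomorphically. The factor $r$ should emerge naturally from the coefficient bound $\sum_{j\in A_{i_k}}\betr{\alpha_j^{(i_k)}}\approx 1/r$: a well-chosen single $x_{j_k}$ ought to carry a fraction of the block roughly proportional to $r$. The natural candidate is to pick $j_k$ maximizing $\betr{\alpha_{j_k}^{(i_k)}}$ and to use the identity $\alpha_{j_k}^{(i_k)}x_{j_k}=y_{i_k}-\sum_{j\in A_{i_k},\,j\neq j_k}\alpha_j^{(i_k)}x_j$ in order to convert the block-level almost-isometric $\ell_1$-estimate into a single-element almost $r$-isomorphic one, controlling the remainder via the triangle inequality and the coefficient sum.

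The hardest point will be that the blocks $A_{i_k}$ produced by Lemma \ref{l James distortion finite} can be arbitrarily large, so the maximal coefficient $\betr{\alpha_{j_k}^{(i_k)}}$ does not come with any uniform positive lower bound; a direct one-shot extraction therefore fails. Overcoming this will likely require an additional extraction step---for instance iterating the lemma so as to produce blocks having a dominant coefficient, or a diagonal argument forcing the block remainders to become negligible along a further subsequence. Analogous technical difficulties are what the arguments for $L^1$ in \cite{Rosenth1973} and for preduals of von Neumann algebras in \cite{RaynaudXu} ultimately overcome using specific features of the respective spaces, and I expect the corresponding step here to rely in the same way on the structural results established in \cite{PePfi, PePfi2}.
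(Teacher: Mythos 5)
Your first step (Lemma \ref{l James distortion finite}) matches the paper, and your overall plan --- normalize into blocks with coefficient sum about $1/r$, then extract one index per block --- is the right skeleton. But the proof has a genuine gap exactly where you locate the difficulty, and the mechanism you propose for closing it is not the one that works. Choosing $j_k$ to maximize $\betr{\alpha_{j_k}^{(i_k)}}$ gives no usable lower bound (as you note, the maximal coefficient can tend to $0$ with the block size), and neither ``iterating the lemma to produce a dominant coefficient'' nor a diagonal argument on the block remainders can be expected to succeed: there is no reason a block realizing the $\ell_1^n$-infimum should concentrate on one coordinate. More importantly, even knowing that the blocks $(y_{i_k})$ span $\ell_1$ almost isometrically is purely norm-level information and by itself cannot single out a good coordinate inside each block; what is needed is a system of test functionals almost biorthogonal to the blocks.

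The paper produces exactly that, and this is the missing idea. By \cite[Thm.\ 4.1]{PePfi} one perturbs a subsequence of the blocks $\varphi_{i_n}$ into pairwise orthogonal functionals $\widetilde\varphi_n$ with $\|\varphi_{i_n}-\widetilde\varphi_n\|<2^{-n}$, and takes the support tripotents $u_n$ of the $\widetilde\varphi_n$: these satisfy $u_n(\widetilde\varphi_k)=\delta_{kn}$, are pairwise orthogonal (so $\|\sum\theta_nu_n\|\le1$ when $\betr{\theta_n}\le1$) and form a weakly null sequence. Since $\betr{u_n(\varphi_{i_n})}>1-2^{-n}$ while $\sum_{j\in A_{i_n}}\betr{\alpha_j}<(1+2^{-i_n})/r$, a pigeonhole argument yields an index $j_n\in A_{i_n}$ with $\betr{u_n(\psi_{j_n})}>\frac{1-2^{-n}}{1+2^{-n}}r$; that is, the single index is selected by where the test functional is large, not by where the coefficient is large. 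The off-diagonal terms $\betr{u_k(\psi_{j_n})}$, $k\ne n$, are then made summably small by Simons' extraction lemma applied to the weakly null sequence $(u_n)$, and the lower $\ell_1$-estimate with constant $(1-\delta_m)r$ follows from testing $\sum_{n\ge m}\beta_n\psi_{j_n}$ against $\sum_{k\ge m}\theta_ku_k$. None of these ingredients (orthogonal perturbation via \cite[Thm.\ 4.1]{PePfi}, support tripotents, Simons' lemma) appears in your proposal, so the crucial step remains unproved.
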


\begin{proof}
Let us first observe a consequence of an extraction lemma of Simons \cite{Sim-DPP}.
Given $\eps>0$, a bounded sequence $(x_l)$ in a Banach space $X$ and a weakly null sequence $(x_l^*)$ in the dual $X^*$ there is a sequence
$(l_n)$ in $\N$ such that $\displaystyle \sum_{k\neq n}\betr{x_{l_k}^*(x_{l_n})}<\eps$ for all $n$. If we apply this successively for
$\eps=2^{-m}$, $m=1, 2, \ldots$ and pass to the diagonal sequence then we get
$\displaystyle \sum_{k\geq m,\;k\neq n}\betr{x_{l_k}^*(x_{l_n})}<2^{-m}$ for all $m$ and $n\geq m$.\smallskip

Suppose $(\psi_j)$ spans $\ell_1^n$'s $r$-uniformly in the predual, $W_*$, of a JBW$^*$-triple $W$.
We may suppose that $\norm{\psi_j}\leq1$ for all $j$.
Take blocks $\displaystyle \varphi_i=\sum_{j\in A_i}\alpha_j\psi_j$ with
\bgl
\sum_{j\in A_i}\betr{\alpha_j}<\frac{1+2^{-i}}{r} \label{eq 221}
\egl as given by Lemma \ref{l James distortion finite}.
By \cite[Thm.\ 4.1]{PePfi} there are a subsequence $(\varphi_{i_n})$
and a sequence $(\widetilde{\varphi}_n)$ of pairwise orthogonal functionals in $W_*$ such that $\|\varphi_{i_n} -  \widetilde{\varphi}_n\|<2^{-n}$.
Let $u_n$ be the support tripotent of $\widetilde{\varphi}_n$ in $W$.
Then $u_n(\widetilde{\varphi}_k)= \delta_{k,n}$ $\forall n, k\in \mathbb{N}$.
Moreover the $u_n$ are pairwise orthogonal, and thus $\norm{\sum\theta_nu_n}\leq1$ if $\betr{\theta_n}\leq1$ for all $n$, and $(u_n)\to 0$ weakly in $W$.
By \eqref{eq 221}, and since $\betr{u_n(\varphi_{i_n})}>1-2^{-n}$, we obtain $j_n\in A_{i_n}$ such that
\bglst
\betr{u_n(\psi_{j_n})}>\frac{1-2^{-n}}{1+2^{-n}}r
\eglst
(because otherwise we would have
$\displaystyle \betr{u_n(\varphi_{i_n})}
\leq\sum_{j\in A_{i_n}}\betr{\alpha_j}\,\betr{u_n(\psi_j)}\leq\frac{1+2^{-i_n}}{r}\;\frac{1-2^{-n}}{1+2^{-n}}r\leq1-2^{-n}\mbox{).}$
By the observation above we may suppose (by passing to appropriate subsequences) that
$\displaystyle \sum_{k\geq m,\;k\neq n}\betr{u_k(\psi_{j_n})}<2^{-m}$ for all $m$ and $n\geq m$.\smallskip

Fix $m\in\N$. Let $\beta_n$, $\theta_n$ be scalars such that $\betr{\theta_n}=1$ and
$\theta_nu_n(\beta_n\psi_{j_n})=\betr{\beta_n}\,\betr{u_n(\psi_{j_n})}$.
We have
\bglst
\sum_{n\geq m}\betr{\beta_n}
\geq\Norm{\sum_{n\geq m}\beta_n\psi_{j_n}}
&\geq&\Betr{\big(\sum_{k\geq m}\theta_ku_k\big)\big(\sum_{n\geq m}\beta_n\psi_{j_n}\big)}\\
&\geq&\sum_{n\geq m}\betr{\beta_n}\,\betr{u_n(\psi_{j_n})}-\sum_{n\geq m}\sum_{k\geq m,\;k\neq n}\betr{\beta_n}\,\betr{u_k(\psi_{j_n})}\\
&\geq&\frac{1-2^{-m}}{1+2^{-m}}r\sum_{n\geq m}\betr{\beta_n}-2^{-m} \ \sum_{n\geq m}\betr{\beta_n}\,
\geq
(1-\delta_m)r\sum_{n\geq m}\betr{\beta_n}
\eglst
where $0<\delta_m=1-(\frac{1-2^{-m}}{1+2^{-m}}-\frac{2^{-m}}{r})\to0$ as $m\to\infty$.
\end{proof}

We briefly recall that a Banach space is said to be \emph{super-reflexive} if all its
ultrapowers are reflexive.

\begin{corollary}\label{c superrefl}
Reflexive subspaces of the predual of a JBW$^*$-triple are super-reflexive.
\end{corollary}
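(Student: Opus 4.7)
The plan is to combine Proposition~\ref{t l1 uniformly hence isomorphically} with an ultrapower argument. Super-reflexivity of $Y$ is equivalent to the reflexivity of every Banach space ultrapower $(Y)_{\mathcal{U}}$, so the goal is to show each such ultrapower is reflexive.

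The first step is to rule out uniform $\ell_1^n$-copies in $Y$. If $Y$ spanned $\ell_1^n$'s $r$-uniformly for some $r>0$, one could enumerate witnessing batches of vectors (an $n$-tuple for each $n$) into a single sequence $(y_j)\subseteq Y\subseteq W_*$ still spanning $\ell_1^n$'s $r$-uniformly, and then invoke Proposition~\ref{t l1 uniformly hence isomorphically} to extract a subsequence spanning an isomorphic copy of $\ell_1$. Since that subsequence stays in the closed subspace $Y$, this would contradict reflexivity of $Y$. Hence no uniform $\ell_1^n$-copies exist in $Y$.

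For the ultrapower step I would use that the predual $W_*$ of a JBW$^*$-triple is L-embedded (Barton--Friedman), a property preserved under Banach space ultrapowers, so $(W_*)_{\mathcal{U}}$ is L-embedded and in particular weakly sequentially complete; weak sequential completeness passes to closed subspaces, so $(Y)_{\mathcal{U}}\subseteq (W_*)_{\mathcal{U}}$ inherits it. By the standard finite-representability correspondence, $(Y)_{\mathcal{U}}$ contains an isomorphic copy of $\ell_1$ exactly when $Y$ spans $\ell_1^n$'s uniformly, which has just been excluded. Rosenthal's $\ell_1$-theorem then forces every bounded sequence in $(Y)_{\mathcal{U}}$ to have a weakly convergent subsequence, so $(Y)_{\mathcal{U}}$ is reflexive; since $\mathcal{U}$ was arbitrary, $Y$ is super-reflexive.

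The main technical obstacle is the ultrapower step—verifying that the L-embedded structure of $W_*$ (or at least some form of weak sequential completeness that lifts to ultrapowers) is indeed inherited by $(W_*)_{\mathcal{U}}$. This is the same ingredient underpinning the Raynaud--Xu corollary for preduals of von Neumann algebras; once granted, the present statement becomes a soft consequence of Proposition~\ref{t l1 uniformly hence isomorphically} and Rosenthal's dichotomy.
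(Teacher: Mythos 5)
Your argument follows the same route as the paper's: Proposition~\ref{t l1 uniformly hence isomorphically} rules out uniform copies of $\ell_1^n$ in a reflexive subspace $Y$, Heinrich's theorem \cite[Thm.~6.3]{Hein80} transfers an $\ell_1$-copy in $(Y)_{\mathcal{U}}$ back to uniform $\ell_1^n$'s in $Y$, and Rosenthal's $\ell_1$-theorem together with weak sequential completeness of the ambient ultrapower shows that a non-reflexive $(Y)_{\mathcal{U}}$ would have to contain $\ell_1$. The paper's proof is phrased contrapositively but uses exactly these ingredients.

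The one genuine gap is the step you yourself flag as the main obstacle: you derive weak sequential completeness of $(W_*)_{\mathcal{U}}$ from the assertion that L-embeddedness is preserved under Banach space ultrapowers. This is not a standard permanence property of L-embedded spaces: the bidual of $(X)_{\mathcal{U}}$ is much larger than $(X^{**})_{\mathcal{U}}$, so the decomposition $X^{**}=X\oplus_1 X_s$ does not pass to the ultrapower in any obvious way, and neither L-embeddedness nor weak sequential completeness is a finitely determined property that automatically survives the ultrapower construction (in contrast to, say, uniform convexity). The paper sidesteps this entirely by invoking the structure-specific fact that $(W_*)_{\mathcal{U}}$ is again the predual of a JBW$^*$-triple (\cite[Prop.~5.5]{BeMar05}); JBW$^*$-triple preduals are L-embedded \cite{BartTim} and hence weakly sequentially complete. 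Replacing your unproven general claim by this citation closes the argument; everything else in your proposal is sound.
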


\begin{proof}
Let $W_*$ be the predual of a JBW$^*$-triple $W$, let $U$ be a closed subspace of $W_*$.
Suppose there is an ultrapower $(U)_{\mathcal{U}}$ which is not reflexive.
This ultrapower is a subspace of the ultrapower $(W_*)_{\mathcal{U}}$ and the
latter is the predual of a JBW$^*$-triple $\mathcal{W}$ (\cite[Prop.\ 5.5]{BeMar05}).
Hence, by weak sequential completeness of $\mathcal{W}_*$ and Rosenthal's $\ell_1$-theorem,  $(U)_{\mathcal{U}}$ contains an isomorphic copy of $\ell_1$.
This copy is finitely representable in $U$ (\cite[Thm.\ 6.3]{Hein80}) hence, by Proposition \ref{t l1 uniformly hence isomorphically},
$U$ contains $\ell_1$ and is not reflexive.
\end{proof}

The above result was established by Jarchow \cite{Jarchow1986} in the particular setting of subspaces of the predual of a von Neumann algebra.

\begin{definition}
A Banach space is said to have the \emph{Banach-Saks property} if every bounded sequence $(x_n)$
admits a subsequence $(x_{n_k})$ such that the Ces\`aro means $\frac{1}{N}\sum_{k=1}^Nx_{n_k}$
(equivalently, the Ces\`aro means $\frac{1}{N}\sum_{k=1}^Ny_k$ of any further subsequence $(y_k)$ of $(x_{n_k})$) converge in norm.


A Banach space is said to have the \emph{weak Banach-Saks property} (or the Banach-Saks-Rosenthal property) if every weakly null sequence $(x_n)$
admits a subsequence $(x_{n_k})$ such that the Ces\`aro means $\frac{1}{N}\sum_{k=1}^Nx_{n_k}$
(equivalently, the Ces\`aro means $\frac{1}{N}\sum_{k=1}^Ny_k$ of any further subsequence $(y_k)$ of $(x_{n_k})$)
converge in norm.
\end{definition}
The equivalences in the definitions come from a result of Erd\"os and Magidor \cite{ErdosMagidor} (or \cite[Prop.\ II.6.1]{BeauLa}), according to which any bounded sequence in a Banach space admits a subsequence such that either all of its subsequences have norm convergent Ces\`aro means or none of its subsequences has norm convergent Ces\`aro means.
The Banach-Saks property implies reflexivity \cite[p.\ 38]{BeauLa} (but not conversely \cite{Baernstein}, \cite[V]{Beauz})
and is implied by superreflexivity \cite{Kakutani1938}. Therefore, by Corollary \ref{c superrefl}, a subspace of a JBW$^*$-predual
is reflexive if, and only if, it is super-reflexive if, and only if, it has the (super-)Banach-Saks property.
For detailed information on the Banach-Saks property and its variants we refer to \cite{Beauz, BeauLa}.\smallskip

It is due to Szlenk that every weakly convergent sequence in $L_1[0,1]$ has a subsequence whose arithmetic means are norm convergent
(cf. \cite{Szlenk1965}, and \cite[p.\ 112]{Diestel1984}).
Szlenk's result on the weak Banach-Saks property has been extended in \cite{BelDie} to duals of C$^*$-algebras,
and implicitly to preduals of von Neumann algebras (see the comment in MR0848901).
The explicit statement for the predual of a von Neumann algebra $N$ appears in \cite[Remark in \S5]{RaynaudXu};
cf.\ also \cite[Theorem 5.4]{HaagRosSuk} for the case in which $N$ is a finite von Neumann algebra.
Next, we establish Szlenk's theorem for JBW$^*$-triple preduals.

\begin{corollary}\label{c wBS}
The predual of a JBW$^*$-triple has the weak Banach-Saks property.
\end{corollary}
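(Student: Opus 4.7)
The plan is to argue by contradiction, following the strategy already announced in the introduction of combining the Rosenthal--Beauzamy criterion with Proposition \ref{t l1 uniformly hence isomorphically}. Assume that some weakly null sequence $(\psi_n)$ in $W_*$ admits no subsequence whose Ces\`aro means converge in norm. Invoking the Erd\"os--Magidor dichotomy recalled just after the definition of the Banach--Saks property, we may pass once to a subsequence so that \emph{no} subsequence whatsoever of $(\psi_n)$ is Ces\`aro summable.

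The substantive step is to invoke the Rosenthal--Beauzamy criterion (as found in \cite{Beauz, BeauLa}): for a weakly null sequence failing the Ces\`aro property in this strong way, one may extract a further subsequence that spans the spaces $\ell_1^n$ $r$-uniformly for some $r>0$. This is exactly the translation from the analytic obstruction (no norm-convergent Ces\`aro means) to the combinatorial one (uniform $\ell_1^n$-copies) that Proposition \ref{t l1 uniformly hence isomorphically} is tailor-made to exploit. Applying that proposition in the JBW$^*$-triple predual $W_*$ then yields a further subsequence $(\psi_{n_k})$ that spans $\ell_1$ almost $r$-isomorphically.

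To close the argument, observe that $(\psi_{n_k})$, being a subsequence of $(\psi_n)$, is weakly null in $W_*$ and hence weakly null in its closed linear span, since a subspace carries the restriction of the ambient weak topology. On the other hand, any sequence equivalent to the unit vector basis of $\ell_1$ fails to be weakly null in its closed linear span, because the standard basis of $\ell_1$ is not weakly null (the constant functional in $\ell_\infty$ evaluates each basis vector to $1$). This contradiction proves the corollary.

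The only step requiring genuine care is the appeal to the Rosenthal--Beauzamy criterion in the exact form needed, namely that a weakly null sequence none of whose subsequences is Ces\`aro summable must, after one further extraction, contain uniformly isomorphic $\ell_1^n$-copies with a common constant; all remaining links in the chain are either Proposition \ref{t l1 uniformly hence isomorphically} itself or elementary facts about weak topologies on subspaces.
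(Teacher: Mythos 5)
Your proposal is correct and follows essentially the same route as the paper: the key input in both cases is the Rosenthal--Beauzamy criterion (the paper phrases it via property $(\mathcal{P}_2)$ from \cite{Beauz, BeauLa}) producing a weakly null sequence spanning $\ell_1^n$'s uniformly, after which Proposition \ref{t l1 uniformly hence isomorphically} yields a subsequence spanning $\ell_1$, contradicting weak nullity. The extra Erd\H{o}s--Magidor extraction you perform at the start is harmless but not needed, since the paper's cited form of Rosenthal's result already delivers the uniformly-$\ell_1^n$ sequence directly from the failure of the weak Banach--Saks property.
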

\begin{proof}
If a Banach space fails the weak Banach-Saks property then, by a result of Rosenthal (cf.\ \cite[Prop.\ II.1]{Beauz} or \cite{BeauLa})
it contains a weakly null sequence which has property $(\mathcal{P}_2)$ in the sense of \cite{Beauz, BeauLa},
and therefore spans $\ell_1^n$'s uniformly.
By Proposition \ref{t l1 uniformly hence isomorphically} this sequence admits a subsequence spanning an isomorphic copy of $\ell_1$
which is not possible for a weakly convergent sequence.
\end{proof}

\section{The abstract measure topology on the predual of a JBW$^*$-triple}

A Banach space $X$ is called L-embedded if its bidual can be written $X^{**}=X\oplus_1 X_s$.
See the standard reference \cite{HWW} for details on L-embedded spaces.
Preduals of von Neumann algebras and JBW$^*$-triples are L-embedded spaces (cf. \cite[Example IV.1.1]{HWW} and \cite[Proposition 3.4]{BartTim}).\smallskip

In \cite[\S 5]{Pfi15} a topology, called the \emph{abstract measure topology}, has been defined on L-embedded Banach spaces $X$, with the particularity that if $X$ is a commutative or a non-commutative $L^1$ with finite measure/trace, this topology coincides on (norm) bounded sets with the usual measure topology. The abstract measure topology of an L-embedded space $X$ will be denoted by $\tau_\mu$. We recall some properties of $\tau_\mu$. It is a sequential topology, which means that sets are closed if they are sequentially closed. Therefore $\tau_\mu$ is determined by its convergent sequences: a sequence $(x_n)$ converges to $x$ in $\tau_\mu$ if and only if $(x_n-x)$ is $\tau_\mu$-null and a sequence $(y_n)$ is $\tau_\mu$-null if and only if each subsequence $(y_{n_k})$ contains a further subsequence $(y_{n_{k_m}})$ which either is norm null or is semi-normalized (i.e. bounded and $\inf\norm{y_{n_{k_m}}}>0$) and such that $(y_{n_{k_m}}/\norm{y_{n_{k_m}}})$ spans $\ell_1$ almost isometrically. In particular,
 $\tau_\mu$-convergent sequences are (norm) bounded.
(Note that in \cite{Pfi15} the definition of 'to span $\ell_1$ almost isometrically' differs slightly from ours but coincides with ours for normalized sequences.)
It is not known whether $\tau_\mu$ is Hausdorff but convergent sequences have unique limits (cf. \cite[Proof of Theorem 5.2]{Pfi15}).
By definition addition is separately continuous with respect to $\tau_\mu$, but it is an open problem whether addition is
sequentially jointly continuous or even jointly continuous on an L-embedded space $X$ (cf. \cite[Question 2]{Pfi15}). It can be shown from the results in \cite{RaynaudXu}, that addition is  $\tau_\mu$-sequentially continuous on the unit ball of a von Neumann algebra predual (see \cite[Comments in Question 2]{Pfi15}). In this section we shall enlarge the examples list by showing that a similar statement remains true for JBW$^*$-triple preduals.\smallskip

Bounded sets of an L-embedded space satisfy a mild form of sequential compactness with respect to $\tau_\mu$: a bounded sequence
admits convex blocks that $\tau_\mu$-converge \cite[Thm.\ 6.1]{Pfi15}.
We say that an L-embedded Banach space has the Koml\'os property if each bounded sequence admits a subsequence such that the
Ces\`aro means of any further subsequence converge with respect to $\tau_\mu$ (to the same limit). This property
is strictly stronger than the just described convex-block compactness because there are L-embedded Banach spaces failing it
\cite[Example 6.2]{Pfi15}. Koml\'os shows in \cite{Komlos} that $L^1[0,1]$ has this property
and so do preduals of JBW$^*$-triple preduals as we shall show next.

\begin{theorem}
The predual of a JBW$^*$-triple has the Koml\'os property.
\end{theorem}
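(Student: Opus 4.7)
The plan is to apply Rosenthal's $\ell_1$-theorem to a given bounded sequence $(x_n) \subset W_*$ and handle the two resulting cases with the tools already assembled in the paper: the weak Banach--Saks property (Corollary \ref{c wBS}) and the pairwise-orthogonal extraction \cite[Thm.\ 4.1]{PePfi} used in the proof of Proposition \ref{t l1 uniformly hence isomorphically}.

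If $(x_n)$ admits a weakly Cauchy subsequence, weak sequential completeness of $W_*$ (already invoked in Corollary \ref{c superrefl}) provides a weakly convergent subsequence $(x_{n_k})$ with limit $x \in W_*$; applying Corollary \ref{c wBS} to the weakly null sequence $(x_{n_k} - x)$ extracts a further subsequence whose Ces\`aro means---and those of every further subsequence---converge in norm to $0$. Since norm convergence is stronger than $\tau_\mu$-convergence (any norm-null sequence trivially fulfils the $\tau_\mu$-null characterization recalled in Section~3), this yields the Koml\'os property with limit $x$.

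In the remaining case, $(x_n)$ contains an $\ell_1$-subsequence and \cite[Thm.\ 4.1]{PePfi} gives a further subsequence $(x_{n_k})$ and a pairwise orthogonal $(\widetilde{x}_k) \subset W_*$ with $\norm{x_{n_k} - \widetilde{x}_k} < 2^{-k}$. Since the Ces\`aro means of $(x_{n_k})$ and $(\widetilde{x}_k)$ along any sub-subsequence differ by a norm-null sequence, the task reduces to showing that the Ces\`aro means $t_N = N^{-1}\sum_{k=1}^N \psi_k$ of an arbitrary bounded pairwise orthogonal sequence $(\psi_k)$ are $\tau_\mu$-null. By pairwise orthogonality $\norm{t_N} = N^{-1}\sum_{k=1}^N \norm{\psi_k}$; if $\liminf \norm{\psi_k} = 0$ a sub-subsequence is norm-null, and otherwise one may assume $\norm{\psi_k} \geq c > 0$. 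Writing $B_i = \{N_{i-1}+1, \ldots, N_i\}$ and $S_i = \sum_{k \in B_i} \psi_k$ for a to-be-chosen fast-growing $(N_j)$, the $S_i$ are again pairwise orthogonal and one obtains
$$\Norm{\sum_{j=1}^m \beta_j\, t_{N_j}} = \sum_{i=1}^m \norm{S_i}\,\Betr{\sum_{j\geq i} \frac{\beta_j}{N_j}};$$
choosing $N_{j+1}/N_j$ large enough forces $(t_{N_j}/\norm{t_{N_j}})$ to span $\ell_1$ almost isometrically, so by the $\tau_\mu$-null characterization $t_N \to 0$ in $\tau_\mu$, and the Koml\'os limit in this case is $0$.

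The main obstacle is the last step: extracting $(N_j)$ growing fast enough and deriving the lower bound $\norm{\sum \beta_j t_{N_j}} \geq (1-\varepsilon) \sum \betr{\beta_j}\,\norm{t_{N_j}}$ from the displayed identity, where the fluctuations of $\norm{\psi_k}$ have to be controlled via a preliminary extraction making $\norm{\psi_k}$ converge. A minor hereditary concern---that every further subsequence of $(x_{n_k})$ also has $\tau_\mu$-null Ces\`aro means---is automatic, since any subsequence of a pairwise orthogonal sequence is again pairwise orthogonal, so the argument applies to it verbatim.
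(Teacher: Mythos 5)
Your two ``pure'' cases are handled essentially correctly --- the weakly convergent case via Corollary \ref{c wBS}, and the pairwise orthogonal case via the identity $\Norm{\sum_j\beta_j t_{N_j}}=\sum_i\norm{S_i}\betr{\sum_{j\geq i}\beta_j/N_j}$ with a lacunary choice of $(N_j)$ --- and these are indeed the two ingredients behind the paper's (one-line) proof. The gap is in the reduction. Rosenthal's dichotomy is the wrong decomposition: an $\ell_1$-subsequence in $W_*$ need \emph{not} be a small perturbation of a pairwise orthogonal sequence, so \cite[Thm.\ 4.1]{PePfi} does not apply in your second case. That theorem is a perturbation result whose hypothesis --- as in every invocation of it in this paper --- is that the sequence spans $\ell_1$ (or the $\ell_1^n$'s uniformly) \emph{almost isometrically}; Rosenthal's theorem only yields an $r$-isomorphic copy, and Proposition \ref{t l1 uniformly hence isomorphically} upgrades this to almost $r$-isomorphic, never to almost isometric. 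The obstruction is genuine, not a citation issue: take $x_n=f+g_n$ in $L^1[0,1]$ with $f\neq0$ fixed and $(g_n)$ normalized with pairwise disjoint supports, all disjoint from $\supp f$. Then $\norm{\sum\alpha_nx_n}=\betr{\sum\alpha_n}\norm{f}+\sum\betr{\alpha_n}\geq\sum\betr{\alpha_n}$, so every subsequence spans $\ell_1$ isomorphically and you land in your second case; yet no pairwise disjoint sequence $(\widetilde{x}_n)$ satisfies $\norm{x_n-\widetilde{x}_n}\to0$ (restricting to $\supp f$ forces $\liminf\norm{x_n-\widetilde{x}_n}\geq\norm{f}$), and the Koml\'os limit of $(x_n)$ is $f$, not $0$. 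So your second case either cannot start or returns the wrong limit.

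What repairs this is precisely what the paper cites: the subsequence splitting lemma \cite[Thm.\ 6.1]{PePfi2}, which after passing to a subsequence gives an \emph{additive} decomposition $x_{n_k}=y_k+z_k$ with $(y_k)$ relatively weakly compact and $(z_k)$ pairwise orthogonal, rather than an either/or dichotomy. Your two arguments then apply to the two summands separately and hereditarily (subsequences preserve both relative weak compactness and pairwise orthogonality), and the bookkeeping that combines them into the Koml\'os property is the content of \cite[\S6, Observation]{Pfi15} or \cite[p.\ 637]{Randri_2003}. In the example above the splitting returns $y_k=f$, $z_k=g_k$ and the correct limit $f$. With the splitting lemma substituted for Rosenthal's theorem, the rest of your argument goes through.
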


\begin{proof}
The theorem is immediate from the splitting property shown in \cite[Thm.\ 6.1]{PePfi2}, from the weak Banach-Saks property Corollary \ref{c wBS}
and from \cite[\S6, Observation]{Pfi15} or \cite[p.\ 637]{Randri_2003}.
\end{proof}

We can now explore the $\tau_\mu$-sequential continuity of the addition on a JBW$^*$-triple predual.

\begin{theorem}
Let $W$ be a JBW$^*$-triple with predual $W_*$. Let $W_*$ be endowed with its abstract measure topology $\tau_\mu$.
Then addition is sequentially $\tau_\mu$-continuous. Consequently, $\tau_\mu$ is Fr\'echet-Urysohn on the closed unit ball of $W_*$.
\end{theorem}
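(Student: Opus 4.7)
The plan is to establish the sequential $\tau_\mu$-continuity of addition first, and then derive the Fr\'echet-Urysohn property on the unit ball as a formal consequence. Since addition on an L-embedded space is separately $\tau_\mu$-continuous by construction, translation invariance reduces the first statement to showing that the sum of two $\tau_\mu$-null sequences in $W_*$ is again $\tau_\mu$-null. Using the sequential characterization of $\tau_\mu$-null sequences recalled just above, it suffices to prove that every subsequence of $(x_n + y_n)$ admits a further subsequence that is either norm null or semi-normalized with its normalization spanning $\ell_1$ almost isometrically. By successive diagonal extractions I would bring $(x_n)$ and $(y_n)$ into one of these two canonical forms along a common subsequence and arrange that $\norm{x_n}, \norm{y_n}, \norm{x_n + y_n}$ converge in $[0, \infty)$. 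The degenerate cases, in which one of these three norms vanishes in the limit, reduce to the trivial case or to a norm-small perturbation of an $\ell_1$ almost isometric sequence, which preserves the $\ell_1$ almost isometric property on a further subsequence.

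The essential case is that both $(x_n)$ and $(y_n)$ are semi-normalized with the $\ell_1$ almost isometric property and $\norm{x_n+y_n}$ is bounded away from $0$. Applying \cite[Thm.\ 4.1]{PePfi} separately, I pass to a further common subsequence along which $(x_n)$ and $(y_n)$ are approximated in norm by pairwise orthogonal sequences $(\tilde{x}_n)$ and $(\tilde{y}_n)$, so that up to a norm-null error the sum equals $(\tilde{x}_n + \tilde{y}_n)$. The main obstacle is that the support tripotents of the $\tilde{x}_n$'s and those of the $\tilde{y}_m$'s need not interact orthogonally across different indices $n, m$, so $(\tilde{x}_n + \tilde{y}_n)$ is not automatically pairwise orthogonal and its normalization is not automatically $\ell_1$ almost isometric. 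To circumvent this, I apply the splitting property \cite[Thm.\ 6.1]{PePfi2} to the bounded sequence $(x_n + y_n)$, which yields, on a further subsequence and modulo a norm-null remainder, a decomposition into a regular weakly convergent part and a pairwise orthogonal part. Comparing this splitting with the analogous splittings of $(x_n)$ and $(y_n)$ separately, whose regular parts are trivial because of the canonical $\tau_\mu$-null form with $\ell_1$ almost isometric normalization, an asymptotic uniqueness argument should force the regular part of the splitting of $(x_n + y_n)$ to tend to $0$ in norm along a further subsequence. What then survives is a pairwise orthogonal sequence approximating $(x_n + y_n)$ in norm, and its normalization spans $\ell_1$ isometrically; hence $(x_n + y_n)/\norm{x_n + y_n}$ spans $\ell_1$ almost isometrically, as required.

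Finally, for the Fr\'echet-Urysohn conclusion on the closed unit ball: once sequential $\tau_\mu$-continuity of addition is established, $(W_*, \tau_\mu)$ becomes a sequential topological abelian group, and the subsequence characterization of $\tau_\mu$-null sequences on bounded sets provides a countable-type description of the $\tau_\mu$-neighborhoods of $0$ in the unit ball via the $\ell_1$ almost isometric condition. Combined with sequential continuity of addition, a standard argument in the L-embedded framework (cf.\ \cite[\S5]{Pfi15}, where the corresponding question is raised and partially answered for von Neumann algebra preduals) then yields that $\tau_\mu$ restricted to the closed unit ball of $W_*$ is Fr\'echet-Urysohn.
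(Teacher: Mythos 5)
Your reduction (translation invariance, the subsequence characterization of $\tau_\mu$-null sequences, the degenerate cases, and the reduction to the situation where both normalized sequences span $\ell_1$ almost isometrically with all relevant norms convergent and positive) matches the paper. The core of the argument, however, has a genuine gap at the step ``an asymptotic uniqueness argument should force the regular part of the splitting of $(x_n+y_n)$ to tend to $0$ in norm.'' What the uniqueness of the L-decomposition $X^{**}=X\oplus_1 X_s$ actually gives you is control of \emph{weak} limits: since the $w^*$-accumulation points of $(x_n)$ and $(y_n)$ lie in $X_s$ (by \cite[Lemma 3.2]{Pfi15}), so does any $w^*$-accumulation point of $(x_n+y_n)$, and hence the regular part of its splitting is \emph{weakly} null. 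A weakly null, relatively weakly compact sequence need not be norm null (think of the Rademacher functions in $L^1$), so your comparison of splittings does not close the argument. The missing content is precisely a ``no loss of mass'' statement, $\lim\norm{x_n+y_n}=\norm{\sigma+\sigma'}$ where $\sigma,\sigma'$ are the singular $w^*$-limits; this is exactly what is not available in a general L-embedded space, which is why sequential joint continuity of addition is open there (\cite[Question 2]{Pfi15}). An argument that only invokes L-embeddedness plus the splitting lemma cannot be complete.

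The paper supplies the missing ingredient from the Jordan structure, and by a different route: it does not try to make $(x_n+y_n)$ pairwise orthogonal at all, but instead proves directly, by induction on $k$, that the normalized sums span $\ell_1^k$'s uniformly, which suffices by Proposition 2.2 / \cite[Thm.\ 4.1]{PePfi}. The induction step uses Godefroy's $w^*$-lower semicontinuity to make $(\varphi_n)$ and $(\psi_n)$ asymptotically L-orthogonal to the fixed finite-dimensional space $F$ spanned by the previously chosen sums, passes to an ultrapower $(W_*)_{\mathcal U}$ (again a JBW$^*$-triple predual) where this L-orthogonality becomes exact, and then invokes \cite[Lemma 2.3]{FriRu87}: L-orthogonality is equivalent to orthogonality of support tripotents. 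Since both $s(\widetilde\varphi)$ and $s(\widetilde\psi)$ lie in the Peirce-$0$ space of $s(\widehat\xi)$, which is a subtriple and in particular closed under addition, one gets $\widehat\xi\perp_L(\widetilde\varphi+\widetilde\psi)$ --- this is the one step where orthogonality survives addition, and it has no analogue in your splitting-based scheme. You would need to incorporate this (or an equivalent use of the triple structure) to repair the proof. For the final assertion, the paper simply cites \cite[Lemma 5.3]{Pfi15}, which is cleaner than the ``standard argument'' you sketch.
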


\begin{proof}
Let $\varphi_n\stackrel{\tau_\mu}{\to}\varphi$ and $\psi_n\stackrel{\tau_\mu}{\to}\psi$ in $W_*$.
Since $\tau_\mu$ is a topology, in order to show that addition is sequentially $\tau_\mu$-continuous it is enough
to show that $(\varphi_n+\psi_n)$ contains a subsequence
$\tau_\mu$-converging to $\varphi+\psi$. By translation invariance of $\tau_\mu$ we may and do suppose that $\varphi=\phi=0$.
By definition of $\tau_\mu$-convergent sequences it remains to show that $(\varphi_n+\psi_n)$ contains a subsequence which
either is norm null or is semi-normalized and such that $((\varphi_n+\psi_n)/\norm{\varphi_n+\psi_n})$ spans $\ell_1$ almost isometrically.
If at least one of the two sequences $(\varphi_n)$, $(\psi_n)$ contains a norm null subsequence then we are done \cite[\S5 Remark (b), page 433]{Pfi15}.
Therefore, the only interesting case we are left with is the following: all $\varphi_n, \psi_n, \varphi_n+\psi_n$ are $\neq0$, the limits
$\lim\norm{\varphi_n}$, $\lim\norm{\psi_n}$ and $\lim\norm{\varphi_n+\psi_n}$ exist and are $>0$ and
both $(\varphi_n/\norm{\varphi_n})$ and $(\psi_n/\norm{\psi_n})$ span $\ell_1$ almost isometrically.\smallskip

In order to show that there is a subsequence of $((\varphi_n+\psi_n)/\norm{\varphi_n+\psi_n})$ which spans $\ell_1$ almost isometrically
it is enough, by \cite[Thm.\ 4.1]{PePfi}, to show that $((\varphi_n+\psi_n)/\norm{\varphi_n+\psi_n})$ spans $\ell_1^k$'s uniformly, that is,
it is enough to show for each $k\in\N$ that if for each $\eps'>0$ some of the $(\varphi_n+\psi_n)/\norm{\varphi_n+\psi_n}$ span
a $(1-\eps')$-copy of $\ell_1^k$ then for each $\eps>0$ some finite selection from $((\varphi_n+\psi_n)/\norm{\varphi_n+\psi_n})$ spans a $(1-\eps)$-copy of $\ell_{1}^{k+1}$.\smallskip

Let $k\in\N$ and $\eps>0$. Choose $\eps'>0$ such that $(1-\eps')^2>1-\eps$.
Take $\varphi_{n_1}+\psi_{n_1},\ldots,\varphi_{n_k}+\psi_{n_k}$ such that
\bglst
\sum_{l=1}^k\betr{\alpha_l}\norm{\varphi_{n_l}+\psi_{n_l}}
\geq\Norm{\sum_{l=1}^k\alpha_l(\varphi_{n_l}+\psi_{n_l})}
\geq(1-\eps')\sum_{l=1}^k\betr{\alpha_l}\norm{\varphi_{n_l}+\psi_{n_l}}
\eglst
for all scalars $\alpha_l$.\smallskip

The following claim is a consequence of Godefroy's well known way to use the $w^*$-lower semicontinuity of the
norm in L-embedded Banach spaces (\cite{G-bien}, \cite[IV.2.2]{HWW}) (as it has been used, for example, throughout \cite{Pfi15}).
The proof is inserted here for the reader's convenience.\smallskip

\emph{Claim.} Let $F\subset X$ be a finite dimensional subspace of an L-embedded Banach space $X$ and let $(x_n)$ be a sequence such that
$(x_n/\norm{x_n})$ spans $\ell_1$ almost isometrically in $X$ and $\lim_{\mathcal{V}}\norm{x_n}>0$ where $\mathcal{V}$ is a
non trivial ultrafilter on $\N$.\\
Then, for every $\eta>0$, there is $U\in\mathcal{V}$ such that
\bgl
\norm{\alpha x+\beta x_n}\geq(1-\eta)(\betr{\alpha }\norm{x}+\betr{\beta}\norm{x_n})
\quad\forall \alpha , \beta\in\CC,\;\forall n\in U,\;\forall x\in F.\label{eq 322}
\egl
\emph{Proof of the claim.} Let $x_s$ be a $w^*$-accumulation point of $(x_n)$ along $\mathcal{V}$.
By \cite[Lemma  3.2]{Pfi15}, $x_s$ satisfies $x_s\in X_s$ (where $X_s\subset X^{**}$ is as above)
and $\norm{x_s}=\lim_{\mathcal{V}}\norm{x_n}$ hence
$\norm{\alpha x+\beta x_s}=\betr{\alpha }\norm{x}+\betr{\beta}\norm{x_s}$
for all $\alpha , \beta\in\CC$, $x\in X$.

Given a fixed $x\in F$, $\norm{x}=1$ and finitely many $(\alpha,\beta)\in\ell_1^2$,
from $\liminf\norm{\alpha x+\beta\frac{x_n}{\norm{x_n}}}\geq\norm{\alpha x+\beta\frac{x_s}{\norm{x_s}}}$, we deduce the existence of $U\in \mathcal{V}$ such that
\bgl
\norm{\alpha x+\beta\frac{x_n}{\norm{x_n}}}\geq(1-\eta/3)(\betr{\alpha}+\betr{\beta}), \label{eq 323}
\egl
for all the $(\alpha,\beta)$ chosen above and $n\in U$. If these finitely many $(\alpha,\beta)$ are chosen to form an $\eta/3$-net of the unit sphere of $\ell_1^2$, then we get \eqref{eq 323} for all $\alpha,\beta\in\CC$ and $n\in U$, but with $2\eta/3$ instead of $\eta/3$.
Finally, if one repeats the same reasoning for finitely many $x$ which form an $\eta/3$-net of the unit sphere of $F$, then the conclusion of the claim follows.\smallskip

Let $F$ be the $k$-dimensional subspace of $W_*$ spanned by the $\varphi_{n_1}+\psi_{n_1},\ldots,\varphi_{n_k}+\psi_{n_k}$.
We apply the claim to $X=W_*$ with $\eta=1/m$, $m\in\N$, which gives a subsequence $(\varphi_{n_m})$ such that
$\norm{\alpha \xi+\beta \varphi_{n_m}}\geq(1-1/m)(\betr{\alpha }\norm{\xi}+\betr{\beta}\norm{\varphi_{n_m}})$
for all $\alpha , \beta\in\CC$, $\xi\in F$.
By the same argument applied to the corresponding subsequence of $(\psi_n)$ we may and will assume that also
$\norm{\alpha \xi+\beta \psi_{n_m}}\geq(1-1/m)(\betr{\alpha }\norm{\xi}+\betr{\beta}\norm{\psi_{n_m}})$
for all $\alpha , \beta\in\CC$, $\xi\in F$.\smallskip

Now we consider $\widehat{\xi}=[\xi]_{\mathcal{U}}$ for $\xi\in F$, $\widetilde{\varphi}=[\varphi_{n_m}]_{\mathcal{U}}$
and $\widetilde{\psi}=[\psi_{n_m}]_{\mathcal{U}}$ in the ultrapower $(W_*)_{\mathcal{U}}$ where ${\mathcal{U}}$ is non trivial
ultrafilter on $\N$. It is known that $(W_*)_{\mathcal{U}}$ is the predual of a JBW$^*$-triple $\mathcal{W}$ (cf. \cite[Prop.\ 5.5]{BeMar05}).
The functionals $\widehat{\xi}$ and $\widetilde{\varphi}$ are L-orthogonal ($\widehat{\xi}\perp_{L} \widetilde{\varphi}$) in $\mathcal{W}_*=(W_*)_{\mathcal{U}}$ because
$$\norm{\alpha\widehat{\xi}+\beta\widetilde{\varphi}}=\lim_{\mathcal{U}}\norm{\alpha\xi+\beta\varphi_{n_m}}
=\betr{\alpha}\,\|\widehat{\xi}\|+\betr{\beta}\,\norm{\widetilde{\varphi}}$$
for all $\xi\in F$. Likewise, $\widehat{\xi}\perp_{L} \widetilde{\psi}$. It is known that $\widehat{\xi}\perp_{L} \widetilde{\varphi}$  if and only if $\widehat{\xi}\perp \widetilde{\varphi}$, that is, the support tripotents of $\widehat{\xi}$ and $\widetilde{\varphi}$ in $\mathcal{W}$ are orthogonal (i.e. $s(\widehat{\xi})\perp s(\widetilde{\varphi})$)
(cf. \cite[Lemma 2.3]{FriRu87}). Similarly we get $s(\widehat{\xi})\perp_{L} s(\widetilde{\psi})$. Thus $s(\widetilde{\varphi}), s(\widetilde{\psi})\in \mathcal{W}_0(s(\widehat{\xi}))$, and hence $\widehat{\xi}\perp \widetilde{\varphi}+\widetilde{\psi}$ (because $\widetilde{\varphi}$ and $\widetilde{\psi}$ lie in $({\mathcal{W}}_*)_0(s(\widehat{\xi}))$). We deduce that $\widehat{\xi}\perp_{L} \widetilde{\varphi}+\widetilde{\psi}$ and
$$\lim_{\mathcal{U}}\norm{\xi+\beta(\varphi_{n_m}+\psi_{n_m})}=\norm{\widehat{\xi}+\beta(\widetilde{\varphi}+\widetilde{\psi})}
=\norm{\xi}+\betr{\beta}\lim_{\mathcal{U}}\norm{\varphi_{n_m}+\psi_{n_m}}.$$
Similarly as above we see that there is $n_{k+1}> n_k$ such that for all $\beta\in\CC$, $\xi\in F$ we have
$$\lim_{\mathcal{U}}\norm{\xi+\beta(\varphi_{n_{m}}+\psi_{n_{m}})}
\geq(1-\eps')\big(\norm{\xi}+\betr{\beta}\norm{\varphi_{n_{k+1}}+\psi_{n_{k+1}}}\big).$$
Let $\alpha_1,\ldots,\alpha_{k+1}\in\CC$ and consider $\xi=\sum_{l=1}^{k}\alpha_l(\varphi_{n_l}+\psi_{n_l})\in F$.
Then
\bglst
\norm{\sum_{l=1}^{k+1}\alpha_l(\varphi_{n_l}+\psi_{n_l})}
&=&\norm{\xi+\alpha_{k+1}(\varphi_{n_{k+1}}+\psi_{n_{k+1}})}\\
&\geq&(1-\eps')\left(\big((1-\eps')\sum_{l=1}^{k}\betr{\alpha_l}\norm{\varphi_{n_l}+\psi_{n_l}}\big)+\betr{\alpha_{k+1}}\norm{\varphi_{n_{k+1}}+\psi_{n_{k+1}}}\right)\\
&\geq&(1-\eps)\sum_{l=1}^{k+1}\betr{\alpha_l}\norm{\varphi_{n_m}+\psi_{n_m}}.
\eglst
The last assertion follows from \cite[Lemma  5.3]{Pfi15}.
\end{proof}

\begin{thebibliography}{10}

\bibitem{Baernstein}
A.~Baernstein.
\newblock On reflexivity and summability.
\newblock {\em Studia Math.}, 42:91--94, 1972.

\bibitem{BartTim} T.~Barton and R.~Timoney.
\newblock Weak$^*$-continuity of Jordan triple products and its applications.
\newblock {\em Math. Scand.}, 59(2):177--191, 1986.

\bibitem{Beauz}
B.~Beauzamy.
\newblock {Banach-Saks properties and spreading models}.
\newblock {\em Math. Scand.}, 44:357--384, 1979.

\bibitem{BeauLa}
B.~Beauzamy and J.-T. Laprest\'{e}.
\newblock {\em Mod\`{e}les \'{e}tal\'{e}s des epaces de Banach}.
\newblock Travaux en Cours. Hermann, Paris, 1984.

\bibitem{BeMar05}
J. {Becerra Guerrero} and M. {Mart\'{\i}n}.
\newblock {The Daugavet property of $C^{\ast}$-algebras, $JB^{\ast}$-triples,
  and of their isometric preduals.}
\newblock {\em {J. Funct. Anal.}}, 224(2):316--337, 2005.

\bibitem{BelDie}
A.~Belanger and J.~Diestel.
\newblock {A remark on weak convergence in the dual of a
  C{$^{*}$}-al\-ge\-bra}.
\newblock {\em Proc. Amer. Math. Soc.}, 98:185--186, 1986.

\bibitem{CabRodPal2014}
M.~Cabrera and A.~Rodriguez Palacios.
\newblock {\em {Non-Associative Normed Algebras: Volume 1, The Vidav-Palmer and
  Gelfand-Naimark Theorems.}}
\newblock Cambridge University Press, 2014.

\bibitem{Chu2012}
Ch.-H.~{Chu}.
\newblock {\em {Jordan structures in geometry and analysis.}}
\newblock Cambridge: Cambridge University Press, 2012.

\bibitem{Diestel1984}
J.~{Diestel}.
\newblock {Sequences and series in a Banach space.}
\newblock {Graduate Texts in Mathematics, 92. New York-Heidelberg-Berlin:
  Springer-Verlag. XIII, 261 p. DM 108.00 (1984).}, 1984.

\bibitem{ErdosMagidor}
P.~{Erd\H{o}s} and M.~{Magidor}.
\newblock {A note on regular methods of summability and the Banach-Saks
  property.}
\newblock {\em {Proc. Am. Math. Soc.}}, 59:232--234, 1976.

\bibitem{FHHMZ}
M.~{Fabian}, P.~{Habala}, P.~{H\'ajek}, V.~{Montesinos}, and
  V.~{Zizler}.
\newblock {\em {Banach space theory. The basis for linear and nonlinear
  analysis.}}
\newblock Berlin: Springer, 2011.

\bibitem{FriRu87}
Y.~{Friedman} and B.~{Russo}.
\newblock {Conditional expectation and bicontractive projections on Jordan
  $C\sp *$- algebras and their generalizations.}
\newblock {\em {Math. Z.}}, 194:227--236, 1987.

\bibitem{G-bien}
G.~Godefroy.
\newblock Sous-espaces bien dispos\'{e}s de {$L^{1}$} -- {Applications}.
\newblock {\em Trans. Amer. Math. Soc.}, 286:227--249, 1984.

\bibitem{HaagRosSuk}
U.~{Haagerup}, H.~P. {Rosenthal}, and F.~A. {Sukochev}.
\newblock {Banach embedding properties of non-commutative $L^p$-spaces.}
\newblock {\em {Mem. Am. Math. Soc.}}, 776:68, 2003.

\bibitem{HWW}
P.~{Harmand}, D.~{Werner}, and W.~{Werner}.
\newblock {\em {$M$-ideals in Banach spaces and Banach algebras.}}
\newblock Berlin: Springer-Verlag, 1993.

\bibitem{Hein80}
S.~{Heinrich}.
\newblock {Ultraproducts in Banach space theory.}
\newblock {\em {J. Reine Angew. Math.}}, 313:72--104, 1980.

\bibitem{Jarchow1986}
Hans {Jarchow}.
\newblock {On weakly compact operators on $C\sp*$-algebras.}
\newblock {\em {Math. Ann.}}, 273:341--343, 1986.

\bibitem{JohLin_Vol1-2}
W.B.~{Johnson} and J.~{Lindenstrauss}, editors.
\newblock {\em {Handbook of the geometry of Banach spaces. Volumes 1. and 2.}}
\newblock Amsterdam: Elsevier, 2001, 2003.

\bibitem{Kakutani1938}
S.~Kakutani.
\newblock Weak convergence in uniformly convex spaces.
\newblock {\em Tohoku Math. J.}, 45:188--193, 1938.

\bibitem{Komlos}
J.~Koml\'{o}s.
\newblock A generalization of a problem of {Steinhaus}.
\newblock {\em Acta Math. Acad. Sci. Hung.}, 18:217--229, 1967.

\bibitem{Lin-Bochner}
P.~K.~Lin.
\newblock {\em K{\"{o}}the-{Bochner} function spaces}.
\newblock Birkh{\"{a}}user, Boston, 2004.

\bibitem{PePfi}
A.M.~Peralta and H.~Pfitzner.
\newblock Perturbation of $\ell_1$-copies in preduals of {JBW$^*$-triples}.
\newblock Preprint 2014.

\bibitem{PePfi2}
A.M.~Peralta and H.~Pfitzner.
\newblock {The Ka\-dec-{Pe\l{}\-czy\'ns\-ki}-Ro\-sen\-thal subsequence
  splitting lemma for JBW$^*$-triple preduals}.
\newblock to appear in \emph{Studia Math.}

\bibitem{Pfi15}
H.~Pfitzner.
\newblock {$L$-embedded Banach spaces and measure topology}.
\newblock {\em Israel J. Math.}, 205(1):421--451, 2015.

\bibitem{Pisier1973}
G.~{Pisier}.
\newblock {Sur les espaces de Banach qui ne contiennent pas uniformement de
  $l^1_n$.}
\newblock {\em {C. R. Acad. Sci., Paris, S\'er. A}}, 277:991--994, 1973.

\bibitem{Randri_2003}
N.~Randrianantoanina.
\newblock {Non-commutative subsequence principles}.
\newblock {\em Math. Z.}, 245:625--644, 2003.

\bibitem{RaynaudXu}
Y.~{Raynaud} and Q.~{Xu}.
\newblock {On subspaces of non-commutative $L_{p}$-spaces.}
\newblock {\em {J. Funct. Anal.}}, 203(1):149--196, 2003.

\bibitem{Rosenth1973}
H.~P. Rosenthal.
\newblock On subspaces of {$L^{p}$}.
\newblock {\em Ann. of Math.}, 97:344--373, 1973.

\bibitem{Sim-DPP}
S.~{Simons}.
\newblock {On the Dunford-Pettis property and Banach spaces that contain
  $c_0$.}
\newblock {\em {Math. Ann.}}, 216:225--231, 1975.

\bibitem{Szlenk1965}
W.~{Szlenk}.
\newblock {Sur les suites faiblement convergentes dans l'espace L.}
\newblock {\em {Stud. Math.}}, 25:337--341, 1965.

\end{thebibliography}

\end{document}